
\documentclass[11pt]{amsart}

\oddsidemargin=.7in \evensidemargin=.7in

\textwidth=6.2in

\addtolength{\oddsidemargin}{-.70in}
\setlength{\evensidemargin}{\oddsidemargin}


\newtheorem{theorem}{Theorem}[section]

\newtheorem{prop}[theorem]{Proposition}

\theoremstyle{definition}

\theoremstyle{remark}

\numberwithin{equation}{section}

\begin{document}

\newcommand{\spacing}[1]{\renewcommand{\baselinestretch}{#1}\large\normalsize}
\spacing{1.14}

\title{On the Randers metrics on two-step homogeneous nilmanifolds of dimension five}

\author {H. R. Salimi Moghaddam}

\address{Department of Mathematics, Faculty of  Sciences, University of Isfahan, Isfahan,81746-73441-Iran.} \email{salimi.moghaddam@gmail.com and hr.salimi@sci.ui.ac.ir}
\keywords{invariant metric, flag curvature,
Berwald space, Randers  space, two-step nilpotent Lie group\\
AMS 2000 Mathematics Subject Classification: 22E60, 53C60, 53C30.}


\begin{abstract}
In this paper we study the geometry of simply connected two-step
nilpotent Lie groups of dimension five. We give the Levi-Civita
connection, curvature tensor, sectional and scalar curvatures of
these spaces and show that they have constant negative scalar
curvature. Also we show that the only space which admits left
invariant Randers metric of Berwald type has three dimensional
center. In this case the explicit formula for computing flag
curvature is obtained and it is shown that flag curvature and
sectional curvature have the same sign.
\end{abstract}

\maketitle


\section{\textbf{Introduction}}
A connected Riemannian manifold which admits a transitive
nilpotent Lie group N of isometries is called a nilmanifold. E.
Wilson showed that for a given homogeneous nilmanifold $M$, there
exists a unique nilpotent Lie subgroup $N$ of $I(M)$ acting simply
transitively on $M$, and $N$ is normal in $I(M)$ (see \cite{Wi}).
Therefore the Riemannian manifold $M$ will be identified with the
Lie group $N$ equipped with a left-invariant Riemannian metric $g$.\\
Between nilpotent Lie groups the family of two-step nilpotent Lie
groups endowed with left invariant Riemannian metrics are studied
specially in the recent years (see \cite{Eb1}, \cite{Eb2} and
\cite{HoKo}.). \\
J. Lauret classified all homogeneous nilmanifolds of dimension
three and four in \cite{La}. Also simply connected two-step
nilpotent Lie groups of dimension five equipped with left
invariant Riemannian metrics are classified by S. Homolya and O.
Kowalski in \cite{HoKo}. For this reason they classified metric
Lie algebras with one, two and three dimensional center. We use
their results in this article.\\
On the other hand studying invariant Finsler metrics on lie groups
and homogeneous spaces developed in these years, (for example see
\cite{DeHo1,DeHo2,DeHo3,EsSa1,EsSa2,Sa1,Sa2,Sa3,Sa4,Sa5,Sa6,ToKo}.).\\
A. T\'{o}th and Z. Kov\'{a}cs studied the geometry of some special
two-Step nilpotent groups with left invariant Finsler metrics in
\cite{ToKo}. In the present paper we study the geometry of simply
connected two-step nilpotent Lie groups of dimension five endowed
with left invariant Riemannian metrics. Then we discuss the
existence of left invariant Randers metrics of Berwald type (an
interesting Finsler metric which have many applications in
physics) on these spaces. Finally we give the explicit formula for
computing flag curvature of these metrics and show that the flag
curvature and sectional curvature of the base left invariant
Riemannian metric have the same sign.


\section{\textbf{Preliminaries}}
In this section we give some preliminaries about invariant
(Riemannian and Finsler) metrics.\\
A Riemannian metric $g$ on a Lie group $G$ is called left
invariant if
\begin{eqnarray}
  g(a)(Y,Z)=g(e)(T_al_{a^{-1}}Y,T_al_{a^{-1}}Z), \ \ \ \ \ \forall
  a\in G, \forall Y,Z\in T_aG,
\end{eqnarray}
where $e$ is the unit element of $G$.\\

For a Lie group $G$ equipped with a left invariant Riemannian
metric $g$ the Levi-Civita connection is defined by the following
formula:
\begin{eqnarray}
 2<\nabla_UV,W>=<[U,V],W>-<[V,W],U>+<[W,U],V>,\label{Levi-Civita}
\end{eqnarray}
for any $U,V,W\in\frak{g}$, where $\frak{g}$ is the Lie algebra of
$G$ and $< ,>$ is the inner product induced by $g$ on $\frak{g}$.\\
Now we can generalize this definition to Finsler manifolds.\\
A Finsler metric on a manifold $M$ is a non-negative function
$F:TM\longrightarrow\Bbb{R}$ with the following properties:
\begin{enumerate}
    \item $F$ is smooth on the slit tangent bundle
    $TM^0:=TM\setminus\{0\}$,
    \item $F(x,\lambda Y)=\lambda F(x,Y)$ for any $x\in M$,
    $Y\in T_xM$ and $\lambda >0$,
    \item the $n\times n$ Hessian matrix $[g_{ij}]=[\frac{1}{2}\frac{\partial^2 F^2}{\partial y^i\partial
    y^j}]$ is positive definite at every point $(x,Y)\in TM^0$.
\end{enumerate}

A special type of Finsler metrics are Randers metrics which have
been introduced by G. Randers \cite{Ra} in his research on general
relativity. Randers metrics are constructed on Riemannian metrics
and vector fields (1-forms).\\
Let $g$ and $X$ be a Riemannian metric and a vector field on a
manifold $M$ respectively such that $\|X\|=\sqrt{g(X,X)}<1$. Then
a Randers metric $F$, defined by $g$ and $X$, is a Finsler metric
as follows:
\begin{eqnarray}\label{Randers}
     F(x,Y)=\sqrt{g(x)(Y,Y)}+g(x)(X(x),Y), \ \ \ \ \forall x\in M,
     Y\in T_xM.\label{IRM}
\end{eqnarray}
Similar to the Riemannian case, a Finsler metric $F$ on a Lie
group $G$ is called left invariant if
\begin{eqnarray}
  F(a,Y)=F(e,T_al_{a^{-1}}Y)  \ \ \ \ \ \ \ \forall a \in G, Y\in T_aG.
\end{eqnarray}

A special family of Randers metrics (or in general case Finsler
metrics) is the family of Berwaldian Randers metrics. A Randers
metric of the form \ref{IRM} is of Berwald type if and only if the
vector field $X$ is parallel with respect to the Levi-Civita
connection of $g$. In these metrics the Chern connection of the
Randers metric $F$ coincide with the Levi-Civita connection of the Riemannian metric $g$.\\
One of the important quantities which associates with a Finsler
manifold is flag curvature which is a generalization of sectional
curvature to Finsler manifolds. Flag curvature is defined as
follows.

\begin{eqnarray}\label{flag}
  K(P,Y)=\frac{g_Y(R(U,Y)Y,U)}{g_Y(Y,Y).g_Y(U,U)-g_Y^2(Y,U)},
\end{eqnarray}
where $g_Y(U,V)=\frac{1}{2}\frac{\partial^2}{\partial s\partial
t}(F^2(Y+sU+tV))|_{s=t=0}$, $P=span\{U,Y\}$,
$R(U,Y)Y=\nabla_U\nabla_YY-\nabla_Y\nabla_UY-\nabla_{[U,Y]}Y$ and
$\nabla$ is the Chern connection induced by $F$ (see \cite{BaChSh}
and \cite{Sh1}.).\\

From now we consider $N$ is a simply connected two-step nilpotent
Lie group of dimension five and $\frak{n}$ is its Lie algebra.\\

\section{\textbf{Lie algebras with 1-dimensional center}}


We can use left invariant Riemannian metrics and left invariant
vector fields for constructing left invariant Randers metrics on
Lie groups.\\
Suppose that $G$ is Lie group, $g$ is a left invariant Riemannian
metric and $X$ is a left invariant vector field  on $G$ such that
$\sqrt{g(X,X)}<1$. Then we can define a left invariant Riemannian
metric on $G$ by using formula \ref{IRM}. In fact we have the
following proposition on these metrics.
\begin{prop}\label{Randers Lie group}
There is a one-to-one correspondence between the invariant Randers
metrics on the Lie group $G$ with the underlying Riemannian metric
$g$ and the left invariant vector fields with length $<1$.
Therefore the invariant Randers metrics are one-to-one
corresponding to the set
\begin{eqnarray}
  V=\{X\in\frak{g}|<X,X><1\}.
\end{eqnarray}
\end{prop}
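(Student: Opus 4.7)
The plan is to establish the bijection by constructing maps in both directions and verifying they are mutual inverses. The underlying Riemannian metric $g$ is fixed throughout, so a Randers metric in this family is determined entirely by its vector field piece.

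First I would go from $V$ to invariant Randers metrics. Given $X\in\mathfrak{g}$ with $\langle X,X\rangle<1$, extend $X$ to a left invariant vector field on $G$, still denoted $X$; by construction $\sqrt{g(x)(X(x),X(x))}=\sqrt{\langle X,X\rangle}<1$ at every point, so formula (\ref{IRM}) defines a Randers metric $F$. I would then check left invariance directly: for $a\in G$ and $Y\in T_aG$,
\begin{eqnarray*}
 F(a,Y)&=&\sqrt{g(a)(Y,Y)}+g(a)(X(a),Y)\\
 &=&\sqrt{g(e)(T_al_{a^{-1}}Y,T_al_{a^{-1}}Y)}+g(e)(T_al_{a^{-1}}X(a),T_al_{a^{-1}}Y)\\
 &=&\sqrt{g(e)(T_al_{a^{-1}}Y,T_al_{a^{-1}}Y)}+g(e)(X(e),T_al_{a^{-1}}Y)\\
 &=&F(e,T_al_{a^{-1}}Y),
\end{eqnarray*}
where the second equality uses left invariance of $g$ and the third uses left invariance of $X$.

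For the reverse direction, take any invariant Randers metric $F$ built from $g$ and some vector field $X$ on $G$ via (\ref{IRM}). I would recover $X$ from $F$ by the standard decomposition: since $\sqrt{g(Y,Y)}$ is even in $Y$ and $g(X,Y)$ is odd, one has $g(x)(X(x),Y)=\tfrac{1}{2}(F(x,Y)-F(x,-Y))$, so $X$ is determined uniquely by $F$ and $g$. Left invariance of $F$ and $g$ then forces this $1$-form (and hence, via $g$-duality, the vector field $X$) to be left invariant. Evaluating at the identity gives an element $X(e)\in\mathfrak{g}$ with $\langle X(e),X(e)\rangle<1$, that is, an element of $V$.

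Finally I would note that the two constructions are mutually inverse: starting from $X\in V$, building $F$ as above, and then extracting the vector field by the parity trick returns the same left invariant extension of $X$; conversely, starting from an invariant Randers metric $F$, extracting $X(e)\in V$, and rebuilding via (\ref{IRM}) reproduces $F$. The only slightly delicate point — and the one I would treat most carefully — is uniqueness of the decomposition $F=\sqrt{g(Y,Y)}+g(X,Y)$ for a fixed $g$; once this parity argument is in place the rest is bookkeeping, and the identification of left invariant vector fields on $G$ with elements of $\mathfrak{g}$ gives the stated correspondence with $V$.
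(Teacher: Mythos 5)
Your argument is correct, but it takes a different route from the paper: the paper's entire proof is a one-line citation, specializing Theorem 2.2 of Deng and Hou (\emph{Invariant Randers Metrics on Homogeneous Riemannian Manifolds}) to the case of a homogeneous space $G/H$ with $H=\{e\}$, whereas you give a direct, self-contained verification. Your two directions are both sound: the forward computation is exactly the left-invariance check one expects, and the reverse direction correctly identifies the only delicate point, namely that the $1$-form part of a Randers metric with fixed underlying Riemannian metric $g$ is uniquely recoverable via the parity decomposition $g(x)(X(x),Y)=\tfrac{1}{2}\bigl(F(x,Y)-F(x,-Y)\bigr)$ (valid because $F$ is only positively homogeneous, so $F(x,-Y)\neq -F(x,Y)$ and the even/odd split carries real content), after which left invariance of $F$ and $g$ forces left invariance of $X$, and the bound $\langle X,X\rangle<1$ is part of the definition of a Randers metric. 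What the paper's approach buys is brevity and placement inside a more general framework (invariant Randers metrics on $G/H$ correspond to $\mathrm{Ad}(H)$-invariant vectors of norm $<1$ in a complement of $\mathfrak{h}$); what your approach buys is that the reader sees exactly why the correspondence is a bijection without consulting the reference, and in effect you have reproduced the proof of the cited theorem in the special case needed here. No gaps.
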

\begin{proof}
It is suffix to let $H=\{e\}$ in theorem 2.2 of \cite{DeHo2}.
\end{proof}

In this section we consider the Lie algebra $\frak{n}$ has
$1-$dimensional center.\\
In \cite{HoKo} S. Homolya and O. Kowalski  showed that there exist
an orthonormal basis $\{e_1,e_2,e_3,e_4,e_5\}$ of $\frak{n}$ such
that
\begin{eqnarray}
  [e_1,e_2]=\lambda e_5 \ \ , \ \ [e_3,e_4]=\mu e_5,
\end{eqnarray}
where $\{e_5\}$ is a basis for the center of $\frak{n}$, and
$\lambda\geq\mu>0$. Also it is considered that the other
commutators are zero.\\

By using the equation \ref{Levi-Civita} for the Levi-Civita
connection we have:

\begin{eqnarray}\label{Levi-Civita case1}
 \begin{tabular}{|c|c|c|c|c|c|}
  \hline
    & $e_1$ & $e_2$ & $e_3$ & $e_4$ & $e_5$ \\
  \hline
  $\nabla_{e_1}$ & 0 & $\frac{\lambda}{2}e_5$ & 0 & 0 & $-\frac{\lambda}{2}e_2$ \\
  \hline
  $\nabla_{e_2}$ & $-\frac{\lambda}{2}e_5$ & 0 & 0 & 0 & $\frac{\lambda}{2}e_1$ \\
  \hline
  $\nabla_{e_3}$ & 0 & 0 & 0 & $\frac{\mu}{2}e_5$ & $-\frac{\mu}{2}e_4$ \\
  \hline
  $\nabla_{e_4}$ & 0 & 0 & $-\frac{\mu}{2}e_5$ & 0 & $\frac{\mu}{2}e_3$ \\
  \hline
  $\nabla_{e_5}$ & $-\frac{\lambda}{2}e_2$ & $\frac{\lambda}{2}e_1$ & $-\frac{\mu}{2}e_4$ & $\frac{\mu}{2}e_3$ & 0 \\
  \hline
\end{tabular}
\end{eqnarray}

The above equations for the Levi-Civita connection show that
curvature tensor is as follows.

\begin{eqnarray}\label{Curvature case1}
  \begin{tabular}{|c|c|c|c|c|c|}
    \hline
         & $e_1$ & $e_2$ & $e_3$ & $e_4$ & $e_5$ \\
    \hline
    $R(e_1,e_2)$ & $\frac{3\lambda^2}{4}e_2$ & $-\frac{3\lambda^2}{4}e_1$ & $\frac{\lambda\mu}{2}e_4$ & $-\frac{\lambda\mu}{2}e_3$ & 0 \\
    \hline
    $R(e_1,e_3)$ & 0 & $\frac{\lambda\mu}{4}e_4$ & 0 & $-\frac{\lambda\mu}{4}e_2$ & 0 \\
    \hline
    $R(e_1,e_4)$ & 0 & $-\frac{\lambda\mu}{4}e_3$ & $\frac{\lambda\mu}{4}e_2$ & 0 & 0 \\
    \hline
    $R(e_1,e_5)$ & $-\frac{\lambda^2}{4}e_5$ & 0 & 0 & 0 & $\frac{\lambda^2}{4}e_1$ \\
    \hline
    $R(e_2,e_3)$ & $-\frac{\lambda\mu}{4}e_4$ & 0 & 0 & $\frac{\lambda\mu}{4}e_1$ & 0 \\
    \hline
    $R(e_2,e_4)$ & $\frac{\lambda\mu}{4}e_3$ & 0 & $-\frac{\lambda\mu}{4}e_1$ & 0 & 0 \\
    \hline
    $R(e_2,e_5)$ & 0 & $-\frac{\lambda^2}{4}e_5$ & 0 & 0 & $\frac{\lambda^2}{4}e_2$ \\
    \hline
    $R(e_3,e_4)$ & $\frac{\lambda\mu}{2}e_2$ & $-\frac{\lambda\mu}{2}e_1$ & $\frac{3\mu^2}{4}e_4$ & $-\frac{3\mu^2}{4}e_3$ & 0 \\
    \hline
    $R(e_3,e_5)$ & 0 & 0 & $-\frac{\mu^2}{4}e_5$ & 0 & $\frac{\mu^2}{4}e_3$ \\
    \hline
    $R(e_4,e_5)$ & 0 & 0 & 0 & $-\frac{\mu^2}{4}e_5$ & $\frac{\mu^2}{4}e_4$ \\
    \hline
  \end{tabular}
\end{eqnarray}

Now we compute the sectional curvature and scalar curvature for
this Riemannian Lie group.\\
Let $\{A,B\}$ be an orthonormal basis for a two dimensional
subspace of $T_eN$, where $e$ is the unit element of $N$, as
follows:
\begin{eqnarray} \label{AB}
  A &=& ae_1+be_2+ce_3+de_4+fe_5 \\
  B &=&
  \tilde{a}e_1+\tilde{b}e_2+\tilde{c}e_3+\tilde{d}e_4+\tilde{f}e_5\nonumber
\end{eqnarray}
By using table \ref{Curvature case1} and above equations we have
\begin{eqnarray}
  R(B,A)A &=&(b\tilde{a}-a\tilde{b})(\frac{3\lambda^2}{4}(ae_2-be_1)+\frac{\lambda\mu}{2}(ce_4-de_3))\nonumber\\
  &&+(d\tilde{c}-c\tilde{d})(\frac{\lambda\mu}{2}(ae_2-be_1)+\frac{3\mu^2}{4}(ce_4-de_3))\nonumber\\
  &&+\frac{\lambda^2}{4}\{(f\tilde{b}-b\tilde{f})(fe_2-be_5)+(f\tilde{a}-a\tilde{f})(fe_1-ae_5)\} \nonumber\\
  &&+\frac{\mu^2}{4}\{(f\tilde{c}-c\tilde{f})(fe_3-ce_5)+(f\tilde{d}-d\tilde{f})(fe_4-de_5)\}\\
  &&+\frac{\lambda\mu}{4}\{(c\tilde{a}-a\tilde{c})(be_4-de_2)+(d\tilde{a}-a\tilde{d})(ce_2-be_3)\nonumber\\
  &&+(c\tilde{b}-b\tilde{c})(de_1-ae_4)+(d\tilde{b}-b\tilde{d})(ae_3-ce_1)\}\nonumber.
\end{eqnarray}

Now a direct computation shows that the sectional curvature $K^R$
can be obtained with the following equation.

\begin{eqnarray}
  K^R(A,B) &=& -\frac{3}{4}\{\lambda^2(a\tilde{b}-b\tilde{a})^2+\mu^2(c\tilde{d}-d\tilde{c})^2\} \nonumber\\
  &&+\frac{\lambda^2}{4}\{(f\tilde{a}-a\tilde{f})^2+(f\tilde{b}-b\tilde{f})^2\} \\
  &&+\frac{\mu^2}{4}\{(f\tilde{c}-c\tilde{f})^2+(f\tilde{d}-d\tilde{f})^2\}\nonumber\\
  &&+\frac{3\lambda\mu}{2}(a\tilde{b}-b\tilde{a})(d\tilde{c}-c\tilde{d})\nonumber
\end{eqnarray}
This formula shows that the Riemannian Lie group $N$ admits
positive and negative sectional curvature. This fact has been
proved by J. A. Wolf in \cite{Wol} which any non-commutative Lie
group admits positive and negative sectional curvatures.\\

Also for any $p\in N$ the scalar curvature is

\begin{eqnarray}\label{scalar case 1}
  S(p)=-\frac{\lambda^2+\mu^2}{2}<0,
\end{eqnarray}
which shows that this Riemannian manifold is of constant negative
scalar curvature.\\

\begin{prop}
There is not any left invariant Randers metric of Berwald type on
simply connected two-step nilpotent Lie groups of dimension five
with $1-$dimensional center.
\end{prop}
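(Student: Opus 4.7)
The plan is to combine the characterization of Berwald-type Randers metrics with the explicit Levi-Civita data in table \ref{Levi-Civita case1}. Recall that by the criterion stated in the preliminaries, a Randers metric $F$ built from the left invariant Riemannian metric $g$ and a left invariant vector field $X$ is of Berwald type if and only if $X$ is parallel with respect to the Levi-Civita connection $\nabla$ of $g$. Therefore, combined with Proposition \ref{Randers Lie group}, the task reduces to showing that the only left invariant vector field $X$ on $N$ satisfying $\nabla_Y X = 0$ for every $Y \in \mathfrak{n}$ is $X = 0$; this will rule out every (non-trivial) left invariant Randers metric of Berwald type.

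First, I would expand a general left invariant vector field in the orthonormal basis, writing $X = x_1 e_1 + x_2 e_2 + x_3 e_3 + x_4 e_4 + x_5 e_5$, and then apply $\nabla_{e_i}$ using the rows of table \ref{Levi-Civita case1}. For example, $\nabla_{e_1} X = \tfrac{\lambda}{2}(x_2 e_5 - x_5 e_2)$, $\nabla_{e_2} X = \tfrac{\lambda}{2}(x_5 e_1 - x_1 e_5)$, $\nabla_{e_3} X = \tfrac{\mu}{2}(x_4 e_5 - x_5 e_4)$, $\nabla_{e_4} X = \tfrac{\mu}{2}(x_5 e_3 - x_3 e_5)$ and $\nabla_{e_5} X = \tfrac{\lambda}{2}(x_2 e_1 - x_1 e_2) + \tfrac{\mu}{2}(x_4 e_3 - x_3 e_4)$. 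Each of these five expressions must vanish identically in the basis.

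Second, I would read off the resulting linear system. Since $\lambda \geq \mu > 0$ by hypothesis, the equations $\nabla_{e_1} X = 0$ and $\nabla_{e_2} X = 0$ force $x_1 = x_2 = x_5 = 0$, while $\nabla_{e_3} X = 0$ and $\nabla_{e_4} X = 0$ force $x_3 = x_4 = x_5 = 0$. (The last identity $\nabla_{e_5} X = 0$ is then redundant.) Consequently $X = 0$, so no left invariant vector field of positive length is parallel, and the proposition follows.

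There is no serious obstacle here: the proof is essentially a direct inspection of the Levi-Civita table, and the positivity assumption $\lambda \geq \mu > 0$ ensures that the coefficients in the linear system are non-degenerate. The only conceptual point to keep in mind is the translation between the Berwald condition and parallelism of $X$, which has already been recorded before the statement; once that is invoked, the argument is purely computational.
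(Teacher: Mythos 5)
Your proposal is correct and follows exactly the paper's route: the paper likewise reduces the Berwald condition to parallelism of the left invariant vector field and then concludes $Q=0$ from table \ref{Levi-Civita case1} by "a direct computation," which is precisely the linear system you write out explicitly. The only difference is that you supply the details of that computation, and they check out against the connection table.
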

\begin{proof}
Proposition \ref{Randers Lie group} says that for an invariant
Randers metric we need a left invariant vector field with
length$<1$. On the other hand we know that a Randers metric is of
Berwald type if and only if the vector field is parallel with
respect to the Levi-Civita connection (see \cite{BaChSh}.). Let
$Q\in\frak{n}$ be a left invariant vector field on $N$ which is
parallel with respect to the Levi-Civita connection. By using
table \ref{Levi-Civita case1} and a direct computation we have
$Q=0$.
\end{proof}

\section{\textbf{Lie algebras with 2-dimensional center}}
In this section we study simply connected two-step nilpotent Lie
groups of dimension five equipped with left-invariant Riemannian
metric and $2-$dimensional center.\\
Let $N$ be as above. S. Homolya and O. Kowalski in \cite{HoKo}
showed that $\frak{n}$ admits an orthonormal basis
$\{e_1,e_2,e_3,e_4,e_5\}$ such that
\begin{eqnarray}
  [e_1,e_2]=\lambda e_4 \ \ , \ \ [e_1,e_3]=\mu e_5,
\end{eqnarray}
where $\{e_4,e_5\}$ is a basis for the center of $\frak{n}$, the
other commutators are zero and $\lambda\geq\mu>0$.\\

The equation \ref{Levi-Civita} and some computations shows that
the Levi-Civita connection of this Riemannian manifold is as the
following table.

\begin{eqnarray}\label{Levi-Civita case2}
 \begin{tabular}{|c|c|c|c|c|c|}
  \hline
      & $e_1$ & $e_2$ & $e_3$ & $e_4$ & $e_5$ \\
  \hline
  $\nabla_{e_1}$ & 0 & $\frac{\lambda}{2}e_4$ & $\frac{\mu}{2}e_5$ & $-\frac{\lambda}{2}e_2$ & $-\frac{\mu}{2}e_3$ \\
  \hline
  $\nabla_{e_2}$ & $-\frac{\lambda}{2}e_4$ & 0 & 0 & $\frac{\lambda}{2}e_1$ & 0 \\
  \hline
  $\nabla_{e_3}$ & $-\frac{\mu}{2}e_5$ & 0 & 0 & 0 & $\frac{\mu}{2}e_1$ \\
  \hline
  $\nabla_{e_4}$ & $-\frac{\lambda}{2}e_2$ & $\frac{\lambda}{2}e_1$ & 0 & 0 & 0 \\
  \hline
  $\nabla_{e_5}$ & $-\frac{\mu}{2}e_3$ & 0 & $\frac{\mu}{2}e_1$ & 0 & 0 \\
  \hline
\end{tabular}
\end{eqnarray}

Now by using the Levi-Civita connection we can compute the
curvature tensor as follows.

\begin{eqnarray}\label{Curvature case2}
  \begin{tabular}{|c|c|c|c|c|c|}
    \hline
     & $e_1$ & $e_2$ & $e_3$ & $e_4$ & $e_5$ \\
    \hline
    $R(e_1,e_2)$ & $\frac{3\lambda^2}{4}e_2$ & $-\frac{3\lambda^2}{4}e_1$ & 0 & 0 & 0 \\
    \hline
    $R(e_1,e_3)$ & $\frac{3\mu^2}{4}e_3$ & 0 & $-\frac{3\mu^2}{4}e_1$ & 0 & 0 \\
    \hline
    $R(e_1,e_4)$ & $-\frac{\lambda^2}{4}e_4$ & 0 & 0 & $\frac{\lambda^2}{4}e_1$ & 0 \\
    \hline
    $R(e_1,e_5)$ & $-\frac{\mu^2}{4}e_5$ & 0 & 0 & 0 & $\frac{\mu^2}{4}e_1$ \\
    \hline
    $R(e_2,e_3)$ & 0 & 0 & 0 & $\frac{\lambda\mu}{4}e_5$ & $-\frac{\lambda\mu}{4}e_4$  \\
    \hline
    $R(e_2,e_4)$ & 0 & $-\frac{\lambda^2}{4}e_4$  & 0 & $\frac{\lambda^2}{4}e_2$ & 0 \\
    \hline
    $R(e_2,e_5)$ & 0 & 0 & $-\frac{\lambda\mu}{4}e_4$ & $\frac{\lambda\mu}{4}e_3$ & 0 \\
    \hline
    $R(e_3,e_4)$ & 0 & $-\frac{\lambda\mu}{4}e_5$ & 0 & 0 & $\frac{\lambda\mu}{4}e_2$ \\
    \hline
    $R(e_3,e_5)$ & 0 & 0 & $-\frac{\mu^2}{4}e_5$ & 0 & $\frac{\mu^2}{4}e_3$ \\
    \hline
    $R(e_4,e_5)$ & 0 & $\frac{\lambda\mu}{4}e_3$ & $-\frac{\lambda\mu}{4}e_2$ & 0 & 0 \\
    \hline
  \end{tabular}
\end{eqnarray}

For $A$ and $B$ similar to the equations \ref{AB} we have
\begin{eqnarray}
  R(B,A)A &=&\frac{3\lambda^2}{4}(b\tilde{a}-a\tilde{b})(ae_2-be_1)+\frac{3\mu^2}{4}(c\tilde{a}-a\tilde{c})(ae_3-ce_1)\nonumber\\
  &&+\frac{\lambda^2}{4}\{(d\tilde{a}-a\tilde{d})(de_1-ae_4)+(d\tilde{b}-b\tilde{d})(de_2-be_4)\} \nonumber\\
  &&+\frac{\mu^2}{4}\{(f\tilde{a}-a\tilde{f})(fe_1-ae_5)+(f\tilde{c}-c\tilde{f})(fe_3-ce_5)\}\\
  &&+\frac{\lambda\mu}{4}\{(c\tilde{b}-b\tilde{c})(de_5-fe_4)+(f\tilde{b}-b\tilde{f})(de_3-ce_4)\nonumber\\
  &&+(d\tilde{c}-c\tilde{d})(fe_2-be_5)+(f\tilde{d}-d\tilde{f})(be_3-ce_2)\}\nonumber
\end{eqnarray}
This shows that for the plan $sapn\{A,B\}$ the sectional curvature
is of the following form.
\begin{eqnarray}
  K^R(A,B) &=& -\frac{3}{4}\{\lambda^2(b\tilde{a}-a\tilde{b})^2+\mu^2(c\tilde{a}-a\tilde{c})^2\} \nonumber\\
  &&+\frac{\lambda^2}{4}\{(d\tilde{a}-a\tilde{d})^2+(d\tilde{b}-b\tilde{d})^2\} \\
  &&+\frac{\mu^2}{4}\{(f\tilde{a}-a\tilde{f})^2+(f\tilde{c}-c\tilde{f})^2\}\nonumber\\
  &&+\frac{\lambda\mu}{2}\{(f\tilde{b}-b\tilde{f})(d\tilde{c}-c\tilde{d})+(c\tilde{b}-b\tilde{c})(d\tilde{f}-f\tilde{d})\}\nonumber
\end{eqnarray}

The scalar curvature of this Riemannian manifold is

\begin{eqnarray}\label{scalar case 2}
  S(p)=-\frac{\lambda^2+\mu^2}{2}<0.
  \end{eqnarray}

The last equation shows that simply connected two-step nilpotent
Lie groups of dimension five equipped with left-invariant
Riemannian metrics with two dimensional center have constant
negative scalar curvature.\\
Also similar to pervious case we have the following proposition.
\begin{prop}
There is not any left invariant Randers metric of Berwald type on
simply connected two-step nilpotent Lie groups of dimension five
with $2-$dimensional center.
\end{prop}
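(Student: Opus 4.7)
The plan is to mimic the proof given for the $1$-dimensional center case, applying Proposition~\ref{Randers Lie group} to reduce the existence question to the existence of a nonzero left invariant vector field that is parallel for the Levi-Civita connection of the background Riemannian metric, and then to show using the table \ref{Levi-Civita case2} that no such vector field exists.

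More concretely, suppose $Q=\alpha_1 e_1+\alpha_2 e_2+\alpha_3 e_3+\alpha_4 e_4+\alpha_5 e_5\in\mathfrak{n}$ is parallel, i.e.\ $\nabla_{e_i}Q=0$ for $i=1,\dots,5$. Reading off $\nabla_{e_i}Q=\sum_j\alpha_j\nabla_{e_i}e_j$ from table \ref{Levi-Civita case2}, one obtains linear equations in the $\alpha_j$ with coefficients $\pm\lambda/2,\pm\mu/2$. For instance, $\nabla_{e_2}Q=\frac{\lambda}{2}(\alpha_4 e_1-\alpha_1 e_4)$ and $\nabla_{e_3}Q=\frac{\mu}{2}(\alpha_5 e_1-\alpha_1 e_5)$ already give $\alpha_1=\alpha_4=\alpha_5=0$ using $\lambda\geq\mu>0$. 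The vanishing of $\nabla_{e_4}Q$ and $\nabla_{e_5}Q$ then forces $\alpha_2=\alpha_3=0$, so that $Q=0$.

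Since Berwaldian Randers metrics on $N$ with background $g$ correspond bijectively to nonzero parallel left invariant vector fields of length $<1$ (by Proposition \ref{Randers Lie group} together with the characterization of Berwald Randers metrics as those whose $1$-form is $g$-parallel, cf.\ \cite{BaChSh}), the conclusion $Q=0$ rules out any such metric.

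I do not anticipate a genuine obstacle here; the step that requires care is simply the bookkeeping to ensure that every entry of $\nabla_{e_i}Q$ is read off correctly from table \ref{Levi-Civita case2} and that each coefficient is forced to vanish. The key structural input making the argument trivial is that $e_1$ appears in every nonzero bracket $[e_1,e_2]=\lambda e_4$, $[e_1,e_3]=\mu e_5$, so the connection components couple $\alpha_1$ to each of $\alpha_2,\alpha_3,\alpha_4,\alpha_5$ and quickly collapse the system.
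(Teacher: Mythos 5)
Your proposal is correct and is exactly the argument the paper intends: the paper gives no separate proof for the $2$-dimensional center case, simply asserting it is "similar to the previous case," i.e.\ reduce via Proposition \ref{Randers Lie group} and the Berwald criterion to finding a parallel left invariant vector field and read off $Q=0$ from table \ref{Levi-Civita case2}. Your explicit computation of $\nabla_{e_2}Q,\nabla_{e_3}Q,\nabla_{e_4}Q,\nabla_{e_5}Q$ correctly fills in the details the paper omits.
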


\section{\textbf{Lie algebras with 3-dimensional center}}

Now we study simply connected two-step nilpotent Lie groups of
dimension five equipped with left-invariant Riemannian metrics
with three dimensional center. This case is different from the
pervious cases because in this case the Lie group $N$ admits a
family of
invariant Randers metrics of Berwald type.\\
The Lie algebra structure of this case is as follows. \\
$\frak{n}$ admits an orthonormal basis $\{e_1,e_2,e_3,e_4,e_5\}$
such that for $\lambda>0$
\begin{eqnarray}
  [e_1,e_2]=\lambda e_3,
\end{eqnarray}
where $\{e_3,e_4,e_5\}$ is a basis for the center of $\frak{n}$
and the other commutators are zero (see \cite{HoKo}.).\\

The Levi-Civita connection of the Riemannian manifold can be
obtained by the equation \ref{Levi-Civita} as the following table.
\begin{eqnarray}\label{Levi-Civita case3}
 \begin{tabular}{|c|c|c|c|c|c|}
  \hline
    & $e_1$ & $e_2$ & $e_3$ & $e_4$ & $e_5$ \\
  \hline
  $\nabla_{e_1}$ & 0 & $\frac{\lambda}{2}e_3$ & $-\frac{\lambda}{2}e_2$ & 0 & 0 \\
  \hline
  $\nabla_{e_2}$ & $-\frac{\lambda}{2}e_3$ & 0 & $\frac{\lambda}{2}e_1$ & 0 & 0 \\
  \hline
  $\nabla_{e_3}$ & $-\frac{\lambda}{2}e_2$ & $\frac{\lambda}{2}e_1$ & 0 & 0 & 0 \\
  \hline
  $\nabla_{e_4}$ & 0 & 0 & 0 & 0 & 0 \\
  \hline
  $\nabla_{e_5}$ & 0 & 0 & 0 & 0 & 0 \\
  \hline
\end{tabular}
\end{eqnarray}
Similar to the above cases the curvature tensor can compute as
this table.
\begin{eqnarray}\label{Curvature case3}
  \begin{tabular}{|c|c|c|c|c|c|}
    \hline
     & $e_1$ & $e_2$ & $e_3$ & $e_4$ & $e_5$ \\
    \hline
    $R(e_1,e_2)$ & $\frac{3\lambda^2}{4}e_2$ & $-\frac{3\lambda^2}{4}e_1$ & 0 & 0 & 0 \\
    \hline
    $R(e_1,e_3)$ & $-\frac{\lambda^2}{4}e_3$ & 0 & $\frac{\lambda^2}{4}e_1$ & 0 & 0 \\
    \hline
    $R(e_1,e_4)$ & 0 & 0 & 0 & 0 & 0 \\
    \hline
    $R(e_1,e_5)$ & 0 & 0 & 0 & 0 & 0 \\
    \hline
    $R(e_2,e_3)$ & 0 & $-\frac{\lambda^2}{4}e_3$ & $\frac{\lambda^2}{4}e_2$ & 0 & 0  \\
    \hline
    $R(e_2,e_4)$ & 0 & 0 & 0 & 0 & 0  \\
    \hline
    $R(e_2,e_5)$ & 0 & 0 & 0 & 0 & 0  \\
    \hline
    $R(e_3,e_4)$ & 0 & 0 & 0 & 0 & 0  \\
    \hline
    $R(e_3,e_5)$ & 0 & 0 & 0 & 0 & 0  \\
    \hline
    $R(e_4,e_5)$ & 0 & 0 & 0 & 0 & 0  \\
    \hline
  \end{tabular}
\end{eqnarray}

Also for orthonormal set $\{A,B\}$ we have

\begin{eqnarray}
  R(B,A)A &=& \frac{\lambda^2}{4}\{3(b\tilde{a}-a\tilde{b})(ae_2-be_1)+(c\tilde{a}-a\tilde{c})(ce_1-ae_3) \\
  &&+(c\tilde{b}-b\tilde{c})(ce_2-be_3)\}\nonumber.
\end{eqnarray}
The last equation shows that the sectional curvature of the
Riemannian manifold $N$ for the plan $span\{A,B\}$is
\begin{eqnarray}
  K^R(A,B) =
  \frac{\lambda^2}{4}\{(c\tilde{a}-a\tilde{c})^2+(c\tilde{b}-b\tilde{c})^2-3(b\tilde{a}-a\tilde{b})^2\}.
\end{eqnarray}

Also by using the orthonormal basis $\{e_1,\cdots,e_5\}$ for any
$p\in N$ we have

\begin{eqnarray}
  S(p) = -\frac{\lambda^2}{2}<0.
\end{eqnarray}

The above equation shows that the Riemannian manifold $N$ is of
constant negative scalar curvature.\\

Now we discuss the left invariant Randers metrics of Berwald
type on this manifold.\\
Let $Q\in\frak{n}$ be a left invariant vector field on $N$ which
is parallel with respect to the Levi-Civita connection induced by
the left invariant Riemannian metric of $N$. By using table
\ref{Levi-Civita case3} and a simple computation we have
\begin{eqnarray}
  Q &=& q_1e_4+q_2e_5.
\end{eqnarray}

On the other hand the length of $Q$ must be less than $1$,
therefore we have $0<\sqrt{q_1^2+q_2^2}<1$.\\

\begin{prop}
Let $F$ be the Randers metric induced by the Riemannian metric $g$
and the left invariant vector field $Q$ as the formula
\ref{Randers}. Then the flag curvature of the flag $(P,A)$ in
$T_eN$ is given by
\begin{eqnarray}
  K(P,A)=\frac{K^R(A,B)}{(1+q_1d+q_2f)^2},
\end{eqnarray}
where $B$ is any vector in $P$ such that $\{A,B\}$ is an
orthonormal basis for $P$.
\end{prop}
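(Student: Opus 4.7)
The plan is to reduce the flag curvature to the Riemannian sectional curvature by exploiting the Berwald property and the very special position of $Q$ inside $\mathfrak{n}$.

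First, since Table \ref{Levi-Civita case3} shows that $\nabla_{e_i}e_4 = \nabla_{e_i}e_5 = 0$ for all $i$, the field $Q = q_1 e_4 + q_2 e_5$ is parallel with respect to the Levi-Civita connection of $g$. Hence $F$ is of Berwald type and its Chern connection coincides with $\nabla$, so the object $R(U,Y)Y$ appearing in the flag curvature formula \ref{flag} is exactly the Riemannian curvature tensor of $g$ already tabulated in Table \ref{Curvature case3}. In particular, for $Y = A$ and $U = B$ orthonormal one has $g(R(B,A)A,B) = K^R(A,B)$.

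Next I would compute the fundamental tensor $g_A$ of the Randers metric. A direct differentiation of $F^2(Y) = g(Y,Y) + 2\sqrt{g(Y,Y)}\,g(Q,Y) + g(Q,Y)^2$ gives the standard identity
\begin{equation*}
g_Y(U,V) = \frac{F}{\alpha}\!\left(g(U,V)-\frac{g(Y,U)g(Y,V)}{\alpha^{2}}\right) + \left(\frac{g(Y,U)}{\alpha}+g(Q,U)\right)\!\left(\frac{g(Y,V)}{\alpha}+g(Q,V)\right),
\end{equation*}
with $\alpha = \sqrt{g(Y,Y)}$. Specializing to $Y=A$, using $\alpha = 1$, $F(A) = 1+q$ with $q := g(Q,A) = q_1 d + q_2 f$, and the orthonormality of $\{A,B\}$, one obtains at once $g_A(A,A) = (1+q)^2$, $g_A(A,B) = (1+q)\,g(Q,B)$, and $g_A(B,B) = (1+q) + g(Q,B)^2$. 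The denominator of \ref{flag} therefore collapses to $g_A(A,A)g_A(B,B) - g_A(A,B)^2 = (1+q)^3$.

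For the numerator $g_A(R(B,A)A,B)$, write $W := R(B,A)A$ and apply the same formula. Two orthogonality observations kill most of the terms: $g(A,W) = 0$ because $g(R(B,A)A,A)$ vanishes by the pairwise antisymmetry of the Riemannian curvature tensor, and $g(Q,W) = 0$ because inspection of Table \ref{Curvature case3} shows that $R(\cdot,\cdot)\cdot$ always takes values in $\operatorname{span}\{e_1,e_2,e_3\}$, which is $g$-orthogonal to $Q\in\operatorname{span}\{e_4,e_5\}$. Combined with $g(A,B)=0$ this yields $g_A(W,B) = (1+q)\,g(W,B) = (1+q)\,K^R(A,B)$. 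Dividing by $(1+q)^3$ gives the claimed formula.

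The only real subtlety is the bookkeeping with the Randers fundamental tensor; the whole point of the argument is that the two orthogonality identities above make every ``Randers correction'' disappear from the numerator except for a single factor of $(1+q)$, so the result reduces to a clean rescaling of $K^R(A,B)$ by $(1+q_1 d + q_2 f)^{-2}$.
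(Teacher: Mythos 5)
Your proof is correct and follows essentially the same route as the paper: invoke the Berwald property to identify the Chern and Levi-Civita connections, compute the four entries $g_A(A,A)$, $g_A(A,B)$, $g_A(B,B)$, $g_A(R(B,A)A,B)$ of the Randers fundamental tensor (your values agree with the paper's), and substitute into the flag curvature formula. The only difference is presentational — you derive these entries from the general formula for $g_Y$ and make explicit the orthogonality facts $g(A,R(B,A)A)=0$ and $g(Q,R(B,A)A)=0$ that the paper leaves implicit in its term $\langle Q,B\rangle\langle R(B,A)A,Q+A\rangle$.
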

\begin{proof}
The Randers metric is of Berwald type therefore the Levi-Civita
connection of $g$ and the Chern connection of $F$ coincide. Hence
$F$ and $g$ have the same curvature tensor. Now a direct
computation shows that

\begin{eqnarray}
  g_A(R(B,A)A,B) &=& <R(B,A)A,B>(1+<Q,A>)+<Q,B><R(B,A)A,Q+A>\nonumber \\
                 &=& K^R(A,B)(1+q_1d+q_2f),
\end{eqnarray}
\begin{eqnarray}
  g_A(A,A) = (1+<Q,A>)^2=(1+q_1d+q_2f)^2,
\end{eqnarray}

\begin{eqnarray}
  g_A(B,B) =
  1+<Q,B>^2+<Q,A>=1+(q_1\tilde{d}+q_2\tilde{f})^2+q_1d+q_2f,
\end{eqnarray}
and
\begin{eqnarray}
   g_A(A,B) =
   <Q,B>(1+<Q,A>)=(q_1\tilde{d}+q_2\tilde{f})(1+q_1d+q_2f).
\end{eqnarray}
Therefore by using formula \ref{flag} the proof is completed.
\end{proof}
The last proposition shows that the Finsler manifold $(N,F)$
admits negative, zero and positive flag curvature in any point.

\large{\textbf{Acknowledgment}}\\
This work was supported by the research grant from Shahrood
university of technology.

\bibliographystyle{amsplain}

\end{document}